\theoremstyle{plain}
\newtheorem{thm}{\protect\theoremname}
\theoremstyle{remark}
\newtheorem{rem}[thm]{\protect\remarkname}
\theoremstyle{plain}
\newtheorem{lem}[thm]{\protect\lemmaname}
\theoremstyle{plain}
\newtheorem{prop}[thm]{\protect\propositionname}
\theoremstyle{definition}
\newtheorem{defn}[thm]{\protect\definitionname}
\theoremstyle{definition}
\newtheorem{example}[thm]{\protect\examplename}
\providecommand{\definitionname}{Definition}
\providecommand{\examplename}{Example}
\providecommand{\lemmaname}{Lemma}
\providecommand{\propositionname}{Proposition}
\providecommand{\remarkname}{Remark}
\providecommand{\theoremname}{Theorem}
\begin{document}
\title{Singularity of Generalized Grey Brownian Motion and Time-Changed Brownian
Motion}
\author{\textbf{Jos{\'e} Lu{\'\i}s da Silva},\\
 CIMA, University of Madeira, Campus da Penteada,\\
 9020-105 Funchal, Portugal.\\
 Email: joses@staff.uma.pt\and \textbf{Mohamed Erraoui}\\
 Universit{\'e} Cadi Ayyad, Facult{\'e} des Sciences Semlalia,\\
 D{\'e}partement de Math{\'e}matiques, BP 2390, Marrakech, Maroc\\
 Email: erraoui@uca.ma}
\maketitle
\begin{abstract}
The generalized grey Brownian motion is a time continuous self-similar
with stationary increments stochastic process whose one dimensional
distributions are the fundamental solutions of a stretched time fractional
differential equation. Moreover, the distribution of the time-changed
Brownian motion by an inverse stable process solves the same equation,
hence both processes have the same one dimensional distribution. In
this paper we show the mutual singularity of the probability measures
on the path space which are induced by generalized grey Brownian motion
and the time-changed Brownian motion though they have the same one
dimensional distribution. This singularity property propagates to
the probability measures of the processes which are solutions to the
stochastic differential equations driven by these processes.\medskip{}

\noindent \textbf{Keywords}: Generalized grey Brownian motion, time-changed Brownian motion,
$p$-variation, $p$-variation index, stochastic differential
equations. 2010 MSC: Primary 60G30, 60G22; Secondary 60G17, 60G18
\end{abstract}

\section{Introduction }

Over the past decades, the physical and mathematical community has
shown a growing interest in modeling anomalous diffusion processes.
The label \emph{anomalous diffusion} is assigned to processes whose
particle displacement variance does not grow linearly in time, in
opposition to the standard Brownian motion that is mainly characterized
by a linear law. In this respect, the terms subdiffusion and superdiffusion
are used for those processes whose variance grows in time is slower
or faster than linear, respectively.

The generalized grey Brownian motion (ggBm for short) $B_{\beta,\alpha}$,
$0<\beta\leq1$, $0<\alpha<2$ was introduced by \cite{Mura_Pagnini_08}
to model both subdiffusion and superdiffusion, see also \cite{Mura_mainardi_09}.
This family of stochastic processes are $\frac{\alpha}{2}$-self-similar
with stationary increments and includes in particular the class of
fractional Brownian motion (fBm) $B^{H}=B_{1,\frac{\alpha}{2}}$ with
Hurst parameter $H=\frac{\alpha}{2}$ and $\beta=1$ was well as standard
Brownian motion (Bm) $W=B_{1,1}$ with $\alpha=\beta=1$. For $\alpha=\beta\in(0,1)$
we obtain grey Brownian motion (gBm) $B_{\beta}=B_{\beta,\beta}$,
$0<\beta\le1$ introduced by \cite{Schneider90} to
study time-fractional diffusion equation. Moreover, the probability
density function (PDF) $f_{\beta,\alpha}$ of ggBm is the fundamental
solution of the stretched time-fractional standard diffusion equation

\begin{equation}
u(x,t)=u_{0}(x)+\frac{1}{\Gamma(\beta)}\frac{\alpha}{\beta}\int_{0}^{t}s^{\nicefrac{\alpha}{\beta-1}}\left(t^{\nicefrac{\alpha}{\beta}}-s^{\nicefrac{\alpha}{\beta}}\right)^{\beta-1}\frac{\partial^{2}}{\partial x^{2}}u(x,s)\,ds,\quad t\geq0,\label{eq:master}
\end{equation}
with initial condition $u_{0}(x)=\delta(x)$, see \cite{Mainardi_Mura_Pagnini_2010}
and references therein. Therefore, the Fourier transform of $f_{\beta,\alpha}$
is 
\[
\mathbb{E}\big(e^{i\theta B_{\beta,\alpha}(t)}\big)=E_{\beta}\left(-\frac{\theta^{2}}{2}t^{\alpha}\right),\quad\theta\in\mathbb{R},\,t\ge0,
\]
where $E_{\beta}$ is the Mittag-Leffler function, see Section \ref{sec:Realization}
for the definition and properties the this function.

It is important to observe that, starting from a master equation \eqref{eq:master}
which describes the dynamic evolution of a probability density function
$f_{\beta,\alpha}$, it is always possible to define an equivalence
class of stochastic processes with the same PDF function $f_{\beta,\alpha}$.
All these processes provide suitable stochastic models for the starting
equation \eqref{eq:master}. One way to do this is to use a subordination
technique. Let us recall that heuristically, the ggBm $B_{\beta,\alpha}$
cannot be a subordinated process (for example if $\beta=1$ it reduces
to a fractional Brownian motion).

Indeed, let $S_{\beta}$ denotes a strictly increasing $\beta$-stable
subordinator and $U_{\beta}$ its inverse, that is 
\[
U_{\beta}(t):=\inf\left\{ s\ge0\middle|S_{\beta}(t)>s\right\} ,\quad t\in\mathbb{R}^{+}:=[0,\infty).
\]
We consider the increasing, right continuous process given by $
t\mapsto U_{\beta,\alpha}(t):=U_{\beta}(t^{\nicefrac{\alpha}{\beta}}).
$
Now let us introduce the process $X_{\beta,\alpha}$ defined by evaluating
Brownian motion (Bm) $W$, independent of the subordinator $S_{\beta}$
and its inverse $U_{\beta}$, at the random time $U_{\beta,\alpha}$,
this is 
\[
X_{\beta,\alpha}(t):=W(U_{\beta,\alpha}(t)),\quad t\ge0.
\]
We will termed it as continuous time-changed Brownian motion. Since
$U_{\beta}$ has a Mittag-Leffler distribution 
\begin{equation}
\mathbb{E}(e^{-uU_{\beta}(s)})=E_{\beta}(-us^{\beta})\quad,u>0,\;s\in\mathbb{R}^{+}.\label{eq:Laplace-U}
\end{equation}
cf.\ \cite{Bingham1971}, then $X_{\beta,\alpha}$
has Laplace transform given by 
\[
\mathbb{E}\big(e^{-uX_{\beta,\alpha}(t)}\big)=E_{\beta}\left(-\frac{u^{2}}{2}t^{\alpha}\right),\quad u>0,\,t\ge0.
\]
Then it follows that the PDF function of the process $X_{\beta,\alpha}$
is the fundamental solution of the stretched time-fractional equation
\eqref{eq:master}. But this equation describes only the evolution
in time of one-dimensional distributions; thus it is mathematical
incomplete for the identification of the process. Clearly we need
to know all finite dimensional distributions.

Here we would like to mention a closed similar result associated to
the fractional Poisson process (fPp) $N_{\beta}$, $0<\beta\le1$,
see \cite{MGS04,Meerschaert2011,Beghin:2009fi}.
More precisely, the fPp is a natural generalization of the standard
Poisson process $N$ with intensity $\lambda>0$. The corresponding
iid waiting times $J_{n}$ are Mittag-Leffler distributed, that is
\[
P(J_{n}>t)=E_{\beta}(-\lambda t^{\beta}).
\]
Then the fPp 
\[
N_{\beta}(t)=\max\left\{ n\ge0\middle|J_{1}+\ldots+J_{n}\le t\right\} 
\]
is a renewal process with Mittag-Leffler waiting times and its distribution
is given by 
\[
p_{n}(t):=P(N_{\beta}(t)=n)=\frac{(\lambda t^{\beta})^{n}}{n!}E_{\beta}^{(n)}(-\lambda t^{\beta}),\quad n\ge0,\;t>0,
\]
with $E_{\beta}^{(n)}(-\lambda t^{\beta}):=\frac{d^{n}}{dz^{n}}E_{\beta}(z)|_{z=-\lambda t^{\beta}}$.
Moreover the probability generating function $G_{\beta}$ of $N_{\beta}$
is 
\[
G_{\beta}(z,t):=\mathbb{E}\big(z^{N_{\beta}(t)}\big)=E_{\beta}\big(\lambda t^{\beta}(z-1)\big).
\]
Consider now the time-changed process $N(U_{\beta}(t))$, called in
\cite{Meerschaert2011} fractal time Poisson process (ftPp). \cite{Beghin:2009fi}
showed that both processes $N_{\beta}$ and $N(U_{\beta})$ have the
same one-dimensional distributions since they solved the same time-fractional
analogue of the forward Kolmogorov equation with the same point source.
So far, we have a similar scenario as ggBm $B_{\beta,\alpha}$ and
the time-changed process $W(U_{\beta,\alpha})$, that is they have
the same one dimensional density functions. The surprising fact here
is indeed that the fPp $N_{\beta}$ and the time-changed process $N(U_{\beta})$
are the same process. The reason for this is: $N_{\beta}$ and $N(U_{\beta})$
are both pure jump processes with iid Mittag-Leffler distributed waiting
times between jumps, see \cite[Theorem~2.2]{Meerschaert2011}. So
it is natural to ask whether the processes $B_{\beta,\alpha}$ and
$W(U_{\beta,\alpha})$ are same or not. Unfortunately this is not
the case. This difference with the Poissonian case is essentially
a consequence of the fact that $B_{\beta,\alpha}$ and $W(U_{\beta,\alpha})$
have not the same $p$-variation index. Indeed they are not both semimartingales,
specifically $B_{\beta,\alpha}$ is not a semimartingale. Moreover,
their probability laws $P_{B_{\beta,\alpha}}$ and $P_{W(U_{\beta,\alpha})}$,
defined on $C([0,1],\mathbb{R})$ (the space of continuous functions
$w:[0,1]\longrightarrow\mathbb{R}$ such that $w(0)=0$), are mutually
singular, see Theorem\ \ref{thm:Singularity} below.

In Section \ref{sec:Realization} we define the families of processes
used in the paper, their main properties and canonical realizations.
In Section\ \ref{sec:Singularity} we prove the mutual singularity
of the probability measures $P_{B_{\beta,\alpha}}$ and $P_{W(U_{\beta,\alpha})}$.
We will determine explicit events which distinguish between the two
measures. The key tool is the $p$-variation of the paths of the corresponding
processes. In Section\ \ref{sec:Applications-SDEs} we apply the
same idea to show that the singularity property propagates to the
solutions of stochastic differential equations (SDEs) driven by ggBm
and time-changed process $W(U_{\beta,\alpha})$ of the form 
\[
X(t)=X(0)+\int_{0}^{t}g(X(s))\,dB_{\beta,\alpha}(s),\;0\le t\le1,
\]
and 
\[
Y(t)=Y(0)+\int_{0}^{t}f(Y(s))\,dW(U_{\beta,\alpha}(s)),\quad0\le t\le1.
\]

\label{sec:Realization}In this section we introduce the required
path spaces in order to realize the stochastic process we deal with
as canonical processes. These spaces are subsets of $C(\mathbb{T},\mathbb{R})$,
the space continuous functions $w:\mathbb{T}\longrightarrow\mathbb{R}$
such that $w(0)=0$. The time parameter set $\mathbb{T}$ is either
$\mathbb{R}^{+}$ or $[0,1]$ depending on the process in question,
we have in mind namely the ggBm $B_{\beta,\alpha}$ and the subordination
of the Bm $W$ by a family of time-change $U_{\beta,\alpha}$, see
details below. The functions $Y(t)$, $t\in\mathbb{T}$ taking values
in $\mathbb{R}$, defined by $Y(t)(w):=w(t)$ are called \emph{coordinate
mappings}.

\subsection{Canonical Realization of Generalized Grey Brownian Motion}

Let $0<\beta<1$ and $1<\alpha<2$ be given. A continuous stochastic
process defined on a complete probability space $\left(\Omega,\mathcal{F},P\right)$
is a generalized grey Brownian motion, denoted by $B_{\beta,\alpha}=\{B_{\beta,\alpha}(t),\,t\geq0\}$,
see \cite{Mura_mainardi_09}, if: 
\begin{enumerate}
\item $B_{\beta,\alpha}(0)=0$, $P$ almost surely. 
\item Any collection $\big\{ B_{\beta,\alpha}(t_{1}),\ldots,B_{\beta,\alpha}(t_{n})\big\}$
with $0\leq t_{1}<t_{2}<\ldots<t_{n}<\infty$ has characteristic function
given, for any $\theta=(\theta_{1},\ldots,\theta_{n})\in\mathbb{R}^{n}$,
by 
\begin{equation}
\mathbb{E}\left(\exp\left(i\sum_{k=1}^{n}\theta_{k}B_{\beta,\alpha}(t_{k})\right)\right)=E_{\beta}\left(-\frac{1}{2}\theta^{\top}\Sigma_{\alpha}\theta\right),\label{eq:charact-func-ggBm}
\end{equation}
where 
\[
\Sigma_{\alpha}=\big(t_{k}^{\alpha}+t_{j}^{\alpha}-|t_{k}-t_{j}|^{\alpha}\big)_{k,j=1}^{n}
\]
and the joint probability density function is equal to 
\[
f_{\beta}(\theta,\Sigma_{\alpha})=\frac{(2\pi)^{-\frac{n}{2}}}{\sqrt{\det\Sigma_{\alpha}}}\int_{0}^{\infty}\tau^{-\frac{n}{2}}e^{-\frac{1}{2\tau}\theta^{\top}\Sigma_{\alpha}^{-1}\theta}M_{\beta}(\tau)\,d\tau.
\]
\end{enumerate}
Here $E_{\beta}$ is the Mittag-Leffler (entire) function 
\[
E_{\beta}(z)=\sum_{n=0}^{\infty}\frac{z^{n}}{\Gamma(\beta n+1)},\quad z\in\mathbb{C},
\]
and where $M_{\beta}$ is the so-called $M$-Wright probability density
function with Laplace transform 
\begin{equation}
\int_{0}^{\infty}e^{-s\tau}M_{\beta}(\tau)\,d\tau=E_{\beta}(-s).\label{eq:M_wright}
\end{equation}
The density $M_{\beta}$ is a particular case of the Wright function
$W_{\lambda,\mu}$, $\lambda>-1$, $\mu\in\mathbb{C}$ and its series
representation, which converges in the whole complex $z$-plane, is
given by 
\[
M_{\beta}(z):=W_{-\beta,1-\beta}(-z)=\sum_{n=0}^{\infty}\frac{(-z)^{n}}{n!\Gamma(-\beta n+1-\beta)}.
\]
For the choices $\beta=\nicefrac{1}{2}$ and $\beta=\nicefrac{1}{3}$
the corresponding $M$-Wright functions are 
\begin{equation}
M_{\nicefrac{1}{2}}(z)=\frac{1}{\sqrt{\pi}}\exp\left(-\frac{z^{2}}{4}\right),\label{eq:MWright_Gaussian}
\end{equation}
\begin{equation}
M_{\nicefrac{1}{3}}(x)  =  3^{\nicefrac{2}{3}}\mathrm{Ai}\left(\frac{x}{3^{\nicefrac{1}{3}}}\right),\label{eq:MWright_and_Airy}
\end{equation}
where $\mathrm{Ai}$ is the Airy function see \cite{Olver2010}
for more details and properties.

The absolute moments of order $\delta>-1$ in $\mathbb{R}^{+}$ of
the density $M_{\beta}$ are finite and given by 
\begin{equation}
\int_{0}^{\infty}\tau^{\delta}M_{\beta}(\tau)\,d\tau=\frac{\Gamma(\delta+1)}{\Gamma(\beta\delta+1)}.\label{eq:moments}
\end{equation}

The grey Brownian motion has the following properties: 
\begin{enumerate}
\item For each $t\geq0$, the moments of any order are given by 
\[
\begin{cases}
\mathbb{E}(B_{\beta,\alpha}^{2n+1}(t)) & =0,\\
\noalign{\vskip4pt}\mathbb{E}(B_{\beta,\alpha}^{2n}(t)) & =\frac{(2n)!}{2^{n}\Gamma(\beta n+1)}t^{n\alpha}.
\end{cases}
\]
\item The covariance function has the form 
\begin{equation}
\mathbb{E}(B_{\beta,\alpha}(t)B_{\beta,\alpha}(s))=\frac{1}{2\Gamma(\beta+1)}\big(t^{\alpha}+s^{\alpha}-|t-s|^{\alpha}\big),\quad t,s\geq0.\label{eq:auto-cv-gBm}
\end{equation}
\item For each $t,s\geq0$, the characteristic function of the increments
is 
\begin{equation}
\mathbb{E}\big(e^{i\theta(B_{\beta}(t)-B_{\beta}(s))}\big)=E_{\beta}\left(-\frac{\theta^{2}}{2}|t-s|^{\alpha}\right),\quad\theta\in\mathbb{R}.\label{eq:cf_gBm_increments}
\end{equation}
\item The process $B_{\beta,\alpha}$ is non Gaussian, $\nicefrac{\alpha}{2}$-self-similar
with stationary increments. 
\item The $\nicefrac{\alpha}{2}$-self-similarity of the ggBm and the ergodic
theorem imply that, with probability $1$, we have 
\begin{equation}
\lim_{n\rightarrow+\infty}\sum_{i=1}^{2^{n}}\left|B_{\beta,\alpha}\left(\frac{i}{2^{n}}\right)-B_{\beta,\alpha}\left(\frac{i-1}{2^{n}}\right)\right|^{\nicefrac{2}{\alpha}}=\mathbb{E}\big(\big|B_{\beta,\alpha}(1)\big|^{\nicefrac{2}{\alpha}}\big)=:\mu_{\beta,\alpha},\label{eq:variation-ggBm}
\end{equation}
see \cite{DaSilva2018}. 
\item The ggBm is not a semimartingale. In addition, $B_{\alpha,\beta}$
cannot be of finite variation on $[0,1]$ and by scaling and stationarity
of the increment on any interval. 
\end{enumerate}
We now construct a canonical version of $B_{\beta,\alpha}$. Let $\big(C([0,1],\mathbb{R}),\mathcal{B}(C([0,1],\mathbb{R}))\big)$
be the measurable space of all real-valued continuous functions vanishing
at zero with the $\sigma$-algebra $\mathcal{B}(C([0,1],\mathbb{R}))$
generated by the cylinder sets. By using the Kolmogorov extension
theorem, the probability measure $P_{B_{\beta,\alpha}}$ induced by
$B_{\beta,\alpha}$ on $C([0,1],\mathbb{R})$ is then characterized
by the probability measure of the cylinder sets. That is, for any
$0\leq t_{1}<t_{2}<\ldots<t_{n}<\infty$, $A_{1},\ldots,A_{n}$ Borel
measurable sets and $n\geq1$ we have 
\[
P_{B_{\beta,\alpha}}\left\{ w\in C\big([0,1],\mathbb{R}\big)\middle|\,w(t_{1})\in A_{1},\ldots,w(t_{n})\in A_{n}\right\} =\int_{A_{1}\times\ldots\times A_{n}}f_{\beta}(\theta,\Sigma_{\alpha})\,d\theta_{1}\ldots d\theta_{n}.
\]
So that the coordinate process 
\[
B_{\beta,\alpha}(t)(w)=w(t),\quad w\in C([0,1],\mathbb{R}),\;t\in[0,1],
\]
is a ggBm.

\subsection{Canonical Realization of Time-Changed Brownian Motion}

In what follows we consider a filtered probability space $(\Omega,\mathcal{F},P,(\mathcal{F}_{t})_{t\ge0})$.
The filtration $(\mathcal{F}_{t})_{t\ge0}$ is assumed to satisfy
the usual conditions, that is increasing, right continuous and $\mathcal{F}_{0}$
contains all $P$-negligible events in $\mathcal{F}$ and supporting
a standard Brownian motion $W$ and an independent $\beta$-stable
process $S_{\beta}$, $0<\beta<1$.

Let $S_{\beta}=\{S_{\beta}(t),\;t\in[0,1]\}$ be a strictly increasing
$\beta$-stable subordinator, with Laplace transform given by, see
\cite[Ch.~III]{Bertoin96}
\[
\mathbb{E}(e^{-\theta S_{\beta}(t)})=e^{-t\theta^{\beta}}, \quad\theta>0,\,t\in[0,1].
\]
Define its inverse process by 
\[
U_{\beta}(t):=\inf\left\{ s\ge0\middle|S_{\beta}(t)>s\right\} ,\quad t\in\mathbb{R}^{+}.
\]

\begin{rem}
\label{rem:Inverse} 
\begin{enumerate}
\item The process $U_{\beta}$ has continuous and nondecreasing paths. In
addition, it is a $\mathbf{\beta}$-self-similar process but has neither
stationary nor independent increments, so it not a L{\'e}vy process, see
\cite{MS2004}. 
%\item \cite{Bingham1971}, showed that
%$U_{\beta}$ has a Mittag-Leffler distribution 
%\begin{equation}
%\mathbb{E}(e^{-\theta U_{\beta}(t)})=E_{\beta}(-\theta t^{\beta})\quad u>0,\,t\in\mathbb{R}^{+}.\label{eq:Laplace-U}
%\end{equation}
\item It follows from \eqref{eq:M_wright} that the distribution of $U_{\beta}(t)$,
$t>0$ is absolutely continuous with respect to the Lebesgue measure
and its density is $\mathbb{M}_{\beta}(\tau,t):=t^{-\beta}M_{\beta}(t^{-\beta}\tau)$,
$\tau\ge0$. $\mathbb{M}_{\beta}$ is called $\mathbb{M}$-Wright
function in two variables which appears as the fundamental solution
of the time fractional drift equation, see \cite{Mainardi_Mura_Pagnini_2010}
and references therein for more details. As a consequence $U_{\beta}(t)$,
$t\ge0$ is not a bounded random variable. 
\end{enumerate}
\end{rem}
We define the time-change process 
\[
U_{\beta,\alpha}(t):=U_{\beta}(f(t)),
\]
where $f(t):=t^{\nicefrac{\alpha}{\beta}},\,t\in[0,1]$. The maps
$t\mapsto U_{\beta,\alpha}(t)$, $t\in[0,1]$ are almost surely increasing
and continuous. 
\begin{lem}
The family of time-change processes $U_{\beta,\alpha}=\big\{ U_{\beta,\alpha}(t),\,t\in[0,1]\big\}$
is a nondecreasing family of $\mathcal{F}_{t}$-stopping times such
that $\mathbb{E}\left(U_{\beta,\alpha}(t)\right)=\frac{t^{\alpha}}{\Gamma(\beta+1)}$. 
\end{lem}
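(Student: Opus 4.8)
The plan is to verify the three assertions separately, each reducing to a standard property of the inverse subordinator recorded above. I would first dispose of the monotonicity: since $f(t)=t^{\alpha/\beta}$ is nondecreasing on $[0,1]$ and, by Remark~\ref{rem:Inverse}(1), $U_{\beta}$ has nondecreasing paths, the composition $t\mapsto U_{\beta,\alpha}(t)=U_{\beta}(f(t))$ is almost surely nondecreasing, so $s\le t$ forces $U_{\beta,\alpha}(s)\le U_{\beta,\alpha}(t)$ a.s. This establishes that we have a nondecreasing family.

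The core step is the stopping-time property, for which I would exploit the duality between the strictly increasing subordinator $S_{\beta}$ and its inverse $U_{\beta}$. Starting from the first-passage representation $U_{\beta}(r)=\inf\{s\ge0\mid S_{\beta}(s)>r\}$, the strict monotonicity and right-continuity of $S_{\beta}$ (with $S_{\beta}(0)=0$) yield the identity
\[
\{U_{\beta}(r)\le u\}=\{S_{\beta}(u)\ge r\},\qquad r,u\ge0.
\]
I would prove this by the usual path argument: if $S_{\beta}(u)\ge r$ then $u$ is an upper bound for the infimum defining $U_{\beta}(r)$, whereas if $S_{\beta}(u)<r$, right-continuity keeps $S_{\beta}$ below $r$ on a right-neighbourhood of $u$, giving $U_{\beta}(r)>u$. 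Applying this with $r=f(t)$, which is deterministic, gives
\[
\{U_{\beta,\alpha}(t)\le u\}=\{S_{\beta}(u)\ge f(t)\}\in\mathcal{F}_{u},
\]
because $S_{\beta}$ is adapted to $(\mathcal{F}_{t})_{t\ge0}$; hence $U_{\beta,\alpha}(t)$ is an $\mathcal{F}_{t}$-stopping time.

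For the expectation I would use the density of $U_{\beta}(s)$ from Remark~\ref{rem:Inverse}(2), namely $\mathbb{M}_{\beta}(\tau,s)=s^{-\beta}M_{\beta}(s^{-\beta}\tau)$. The substitution $v=s^{-\beta}\tau$ together with the moment formula \eqref{eq:moments} at $\delta=1$, giving $\int_{0}^{\infty}vM_{\beta}(v)\,dv=\Gamma(2)/\Gamma(\beta+1)=1/\Gamma(\beta+1)$, yields $\mathbb{E}(U_{\beta}(s))=s^{\beta}/\Gamma(\beta+1)$. Substituting $s=f(t)=t^{\alpha/\beta}$ then produces $\mathbb{E}(U_{\beta,\alpha}(t))=t^{\alpha}/\Gamma(\beta+1)$, as claimed; the same value follows equivalently by differentiating the Laplace transform \eqref{eq:Laplace-U} at $u=0$.

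I expect no serious obstacle here, as each piece is routine. The only delicate point is pinning down the inequalities in the duality identity: one must invoke the strict monotonicity and right-continuity of $S_{\beta}$ to settle the boundary case $\{S_{\beta}(u)=f(t)\}$ correctly, and to guarantee that the resulting event is genuinely $\mathcal{F}_{u}$-measurable rather than only measurable with respect to a larger $\sigma$-algebra.
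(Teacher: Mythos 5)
Your proof is correct and follows essentially the same route as the paper's: the stopping-time property via the duality identity $\{U_{\beta,\alpha}(t)\le s\}=\{S_{\beta}(s)\ge t^{\nicefrac{\alpha}{\beta}}\}$, and the expectation via the density $\mathbb{M}_{\beta}$ from Remark~\ref{rem:Inverse} together with the moment formula \eqref{eq:moments}. You merely supply the details (the path argument for the duality, the change of variables) that the paper leaves implicit.
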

\begin{proof}
Indeed, for any $s\geq0$, we have 
\[
[U_{\alpha,\beta}(t)\leq s]=\left[t^{\nicefrac{\alpha}{\beta}}\leq S_{\beta}(s)\right]\in\mathcal{F}_{s}.
\]
Moreover we have from assertion 3 of Remark \ref{rem:Inverse} and
equality \eqref{eq:moments} that $\mathbb{E}\big(U_{\beta,\alpha}(t)\big)=\frac{t^{\alpha}}{\Gamma(\beta+1)}$. 
\end{proof}
We introduce the subordination of $W$ by the time-change process
$U_{\beta,\alpha}$, that is 
\[
X_{\beta,\alpha}:=\big\{ X_{\beta,\alpha}(t):=W(U_{\beta,\alpha}(t)),\,t\in[0,1]\big\}.
\]
and denote by $P_{X_{\beta,\alpha}}$ the measure induced by $X_{\beta,\alpha}$
on $\big(C([0,1],\mathbb{R}),\mathcal{B}\big(C([0,1],\mathbb{R})\big)\big)$.
The process $X_{\beta,\alpha}$ possesses the following properties. 
\begin{enumerate}
\item The time-change process $X_{\beta,\alpha}$ is a non Gaussian and
$\nicefrac{\alpha}{2}$-self-similar. In fact, the characteristic
function of $X_{\beta,\alpha}(t)=W(U_{\beta,\alpha}(t))$ is given
by 
\begin{equation}
\mathbb{E}(e^{i\theta X_{\beta,\alpha}(t)})=\int_{0}^{\infty}\mathbb{E}(e^{i\theta W(\tau)})\,dP_{U_{\beta,\alpha}(t)}(\tau)=\int_{0}^{\infty}e^{-\frac{\theta^{2}}{2}\tau}\,dP_{U_{\beta,\alpha}(t)}(\tau)=E_{\beta}\left(-\frac{\theta^{2}}{2}t^{\alpha}\right),\label{eq:one-dimensional}
\end{equation}
where in the last equality we used \eqref{eq:Laplace-U} and the fact
that $(f(t))^{\beta}=t^{\alpha}$. Since $U_{\beta}$ is $\beta$-self-similar,
then $U_{\beta,\alpha}$ is $\alpha$-self-similar. Using the fact
that $W$ is $\nicefrac{1}{2}$-self-similar, it follows that $X_{\beta,\alpha}$
is $\nicefrac{\alpha}{2}$-self-similar. 
\item The process $X_{\beta,\alpha}$ has no stationary increments, it is
not a L{\'e}vy process. Indeed, it follows from Corollary\ 2.46 in \cite{Morters_Peres}
we have 
\[
\mathbb{E}\left[\left(X_{\beta,\alpha}(t)-X_{\beta,\alpha}(s)\right)^{2}\right]  =\mathbb{E}\left[\left(X_{\beta,\alpha}(t)\right)^{2}\right]-\mathbb{E}\left[\left(X_{\beta,\alpha}(s)\right)^{2}\right].
\]
A simple computation shows that 
\[
\mathbb{E}\left[\left[\left(X_{\beta,\alpha}(t)\right)^{2}\right]\right]  =\frac{t^{\alpha}}{\Gamma(\beta+1)}\quad \mathrm{and} \quad 
\mathbb{E}\left[\left[\left(X_{\beta,\alpha}(s)\right)^{2}\right]\right]  =\frac{s^{\alpha}}{\Gamma(\beta+1)}.
\]
Therefore, we obtain 
\begin{equation}
\mathbb{E}\left[\left(X_{\beta,\alpha}(t)-X_{\beta,\alpha}(s)\right)^{2}\right]=\frac{t^{\alpha}}{\Gamma(\beta+1)}-\frac{s^{\alpha}}{\Gamma(\beta+1)}.\label{eq:1-1}
\end{equation}
\item The process $X_{\beta,\alpha}$ is an $(\mathcal{G}_{t})$-semimartinagle,
where $\mathcal{G}_{t}:=\mathcal{F}_{U_{\beta,\alpha}(t)}$, see \cite[Corollary~10.12]{Jacod1979}. 
\end{enumerate}
Let $(C(\mathbb{R}^{+},\mathbb{R}),\mathcal{B}(C(\mathbb{R}^{+},\mathbb{R})),P_{W})$
be the classical Wiener space, that is the space of all continuous
functions $w:\mathbb{R}^{+}\longrightarrow\mathbb{R}$ with $w(0)=0$,
endowed with the locally uniform convergence topology. $P_{W}$ is
the Wiener measure so that the coordinate process 
\[
W(t)(w):=w(t),\quad\forall t\ge0,\;w\in C(\mathbb{R}^{+},\mathbb{R})
\]
is a standard Brownian motion.

Let $\mathbb{U}$ be the space of all continuous nondecreasing functions
$l:[0,1]\longrightarrow\mathbb{R}^{+}$ with $l(0)=0$. The space
$\mathbb{U}$ is equipped with uniform convergence topology. Denote
by $P_{U_{\beta,\alpha}}$ the probability measure induced by $U_{\beta,\alpha}$
on $\mathbb{U}$. Then the time-change process $U_{\beta,\alpha}$
can be realized as a canonical process on $\left(\mathbb{U},\mathcal{B}\left(\mathbb{U}\right),P_{U_{\beta,\alpha}}\right)$
by 
\[
U_{\beta,\alpha}(t)(l):=l(f(t)),\,\,t\in[0,1],\,\,l\in\mathbb{U}.
\]
Moreover, the processes $S_{\beta}$ and $W$ are assumed to be mutually
independent, then we have also the independence between $W$ and the
time-change process $U_{\beta,\alpha}$.

Since $U_{\beta,\alpha}$ and $W$ are independent, $X_{\beta,\alpha}$
is the canonical process on the product space $\big(C\left(\mathbb{R}^{+},\mathbb{R}\right)\times\mathbb{U},\mathcal{B}(C\left(\mathbb{R}^{+},\mathbb{R}\right))\otimes\mathcal{B}(\mathbb{U}),P_{W}\otimes P_{U_{\beta,\alpha}}\big)$
such that 
\[
X_{\beta,\alpha}(t)(w,l):=W\big(U_{\beta,\alpha}(t)(l)\big)(w)=w\big(l(f(t))\big),\,\,t\in[0,1],\,\,w\in C\left(\mathbb{R}^{+},\mathbb{R}\right),\;l\in\mathbb{U}.
\]
Notice that we need to use the space $C(\mathbb{R}^{+},\mathbb{R})$
for the realization of $X_{\beta,\alpha}$ due to the fact that $U_{\beta}(t)$,
$t\ge0$ is not a bounded random variable, cf.\ Remark \ref{rem:Inverse}-3.
Moreover, $P_{X_{\beta,\alpha}}$ is a probability measure on the
path space 
\begin{equation}
\mathcal{X}=\left\{ w\circ l\circ f:[0,1]\longrightarrow\mathbb{R}\middle|\,w\in C\left(\mathbb{R}^{+},\mathbb{R}\right),\,\,l\in\mathbb{U}\right\} ,\label{eq:subor-full-set}
\end{equation}
equipped with the uniform convergence topology.

\section{Singularity of Generalized Grey Brownian Motion and Time-changed
Brownian Motion}

\label{sec:Singularity}In this section we establish the mutual singularity
of the probability measures $P_{B_{\beta,\alpha}}$ and $P_{X_{\beta,\alpha}}$
on $C([0,1],\mathbb{R})$, see Theorem\ \ref{thm:Singularity} below.
Let us first show that both processes $B_{\beta,\alpha}$ and $X_{\beta,\alpha}$
have only the same one dimensional distributions. 
\begin{prop}
\label{prop:1-dim-distr}The processes $B_{\beta,\alpha}$ and $X_{\beta,\alpha}$
have only the same one dimensional distribution. 
\end{prop}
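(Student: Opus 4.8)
The plan is to split the claim into two parts: the coincidence of one-dimensional marginals (which is essentially already recorded) and the failure of coincidence at the level of two-dimensional distributions, for which the covariance function furnishes an explicit witness. For the first part, the one-dimensional characteristic function of $X_{\beta,\alpha}(t)$ is $E_{\beta}(-\tfrac{\theta^{2}}{2}t^{\alpha})$ by \eqref{eq:one-dimensional}, while the $n=1$ specialization of \eqref{eq:charact-func-ggBm} gives the very same characteristic function for $B_{\beta,\alpha}(t)$. By uniqueness of characteristic functions, $B_{\beta,\alpha}(t)$ and $X_{\beta,\alpha}(t)$ have identical laws for every fixed $t$, so the processes do share all one-dimensional distributions.

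The crux is to distinguish the two-dimensional distributions, and I would do this through the covariance. For $B_{\beta,\alpha}$ the covariance is given directly by \eqref{eq:auto-cv-gBm}. For $X_{\beta,\alpha}$ I would compute it by conditioning on the time change. Fix $0\le s\le t\le 1$. Using the independence of $W$ and $U_{\beta,\alpha}$ and the Brownian covariance $\mathbb{E}(W(a)W(b))=a\wedge b$, one gets
\[
\mathbb{E}\big(X_{\beta,\alpha}(t)X_{\beta,\alpha}(s)\big)=\mathbb{E}\big(U_{\beta,\alpha}(t)\wedge U_{\beta,\alpha}(s)\big).
\]
Since $U_{\beta,\alpha}$ has almost surely nondecreasing paths, $U_{\beta,\alpha}(t)\wedge U_{\beta,\alpha}(s)=U_{\beta,\alpha}(s)$, and by the Lemma $\mathbb{E}(U_{\beta,\alpha}(s))=\tfrac{s^{\alpha}}{\Gamma(\beta+1)}$, whence $\mathbb{E}(X_{\beta,\alpha}(t)X_{\beta,\alpha}(s))=\tfrac{(s\wedge t)^{\alpha}}{\Gamma(\beta+1)}$.

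Comparing this with \eqref{eq:auto-cv-gBm}, for $0<s<t$ the difference equals
\[
\mathbb{E}\big(B_{\beta,\alpha}(t)B_{\beta,\alpha}(s)\big)-\mathbb{E}\big(X_{\beta,\alpha}(t)X_{\beta,\alpha}(s)\big)=\frac{1}{2\Gamma(\beta+1)}\big(t^{\alpha}-s^{\alpha}-(t-s)^{\alpha}\big).
\]
The key algebraic fact is that for $1<\alpha<2$ the power function $x\mapsto x^{\alpha}$ is strictly superadditive on $(0,\infty)$, so $t^{\alpha}=((t-s)+s)^{\alpha}>(t-s)^{\alpha}+s^{\alpha}$, making the difference strictly positive for every $0<s<t$. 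Since the covariance is a functional of the two-dimensional law, two processes with identical two-dimensional distributions must have identical covariances; the strict inequality therefore shows that the two-dimensional distributions of $B_{\beta,\alpha}$ and $X_{\beta,\alpha}$ differ. Together with the first part this proves that the processes agree in, and only in, their one-dimensional distributions.

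I expect the covariance computation for $X_{\beta,\alpha}$ to be the main obstacle: it rests on conditioning on the time change, on the monotonicity of $U_{\beta,\alpha}$ to collapse the minimum, and on the moment formula from the Lemma. The remaining distinction is then purely the strict superadditivity of $x^{\alpha}$, which is the genuine source of the mismatch, while the coincidence of one-dimensional laws is immediate from the stated characteristic functions.
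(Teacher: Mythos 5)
Your proof is correct, and its skeleton matches the paper's: both establish the one-dimensional coincidence from the common characteristic function $E_{\beta}\left(-\frac{\theta^{2}}{2}t^{\alpha}\right)$ via \eqref{eq:charact-func-ggBm} and \eqref{eq:one-dimensional}, and both then separate the two-dimensional laws through a second-moment functional of the pair $\big(Z(s),Z(t)\big)$. The difference is in which functional and how it is computed. The paper compares increment variances, obtaining $\mathbb{E}\left[\left(X_{\beta,\alpha}(t)-X_{\beta,\alpha}(s)\right)^{2}\right]=\frac{t^{\alpha}-s^{\alpha}}{\Gamma(\beta+1)}$ in \eqref{eq:1-1} by citing Corollary~2.46 of \cite{Morters_Peres}, and contrasts it with $\frac{|t-s|^{\alpha}}{\Gamma(\beta+1)}$ for the ggBm. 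You compare covariances, computing $\mathbb{E}\big(X_{\beta,\alpha}(t)X_{\beta,\alpha}(s)\big)=\mathbb{E}\big(U_{\beta,\alpha}(s)\wedge U_{\beta,\alpha}(t)\big)=\frac{(s\wedge t)^{\alpha}}{\Gamma(\beta+1)}$ by conditioning on the independent time change, using path monotonicity and the moment formula of the Lemma. Since the marginal second moments of the two processes agree, the two functionals are equivalent by polarization, so the discriminating statistic is the same; what your route buys is self-containedness (the conditioning argument replaces the external citation and makes the underlying orthogonality-of-increments fact explicit). Two further points are in your favor. First, you justify the final inequality by the strict superadditivity of $x\mapsto x^{\alpha}$ for $\alpha\in(1,2)$, whereas the paper merely asserts $|t-s|^{\alpha}\neq t^{\alpha}-s^{\alpha}$. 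Second, you correctly restrict to $0<s<t$: at $s=0$ the two quantities coincide, so the paper's claim that \eqref{eq:non-equal-one-dist} holds for any $0\le s<t\le1$ is not literally true at the endpoint $s=0$; this is harmless for the conclusion, since a single separating pair suffices, but your formulation avoids the slip. The only step you gloss over is the integrability needed to justify the conditioning (e.g.\ $\mathbb{E}\big|W(a)W(b)\big|\le\sqrt{ab}$ together with $\mathbb{E}\big(U_{\beta,\alpha}(t)\big)<\infty$), which is routine and equally implicit in the paper.
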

\begin{proof}
For any $\theta\in\mathbb{R}$, it follows from \eqref{eq:charact-func-ggBm},
with $n=1$ and \eqref{eq:one-dimensional}, that 
\[
\mathbb{E}(e^{i\theta B_{\beta,\alpha}(t)})=\mathbb{E}(e^{i\theta X_{\beta,\alpha}(t)})=E_{\beta}\left(-\frac{\theta^{2}}{2}t^{\alpha}\right).
\]
This shows that the one dimensional distribution of $B_{\beta,\alpha}$
and $X_{\beta,\alpha}$ coincides. In order to show that the distributions
of higher order do not coincide it is sufficient to prove that, for
any $0\le s<t\le1$ 
\begin{equation}
\mathbb{E}\left[\left(B_{\beta,\alpha}(t)-B_{\beta,\alpha}(s)\right)^{2}\right]\neq\mathbb{E}\left[\left(X_{\beta,\alpha}(t)-X_{\beta,\alpha}(s)\right)^{2}\right].\label{eq:non-equal-one-dist}
\end{equation}
On one hand, it is clear that 
\[
\mathbb{E}\left[\left(B_{\beta,\alpha}(t)-B_{\beta,\alpha}(s)\right)^{2}\right]=\frac{\vert t-s\vert^{\alpha}}{\Gamma(\beta+1)}.
\]
On the order hand, from \eqref{eq:one-dimensional} we have 
\[
\mathbb{E}\left[\left(X_{\beta,\alpha}(t)-X_{\beta,\alpha}(s)\right)^{2}\right]=\frac{t^{\alpha}}{\Gamma(\beta+1)}-\frac{s^{\alpha}}{\Gamma(\beta+1)}.
\]
Therefore, the conclusion follows from the fact that for any $\alpha\in(1,2)$,
$\vert t-s\vert^{\alpha}\neq t^{\alpha}-s^{\alpha}$. 
\end{proof}
As a consequence of Proposition\ \ref{prop:1-dim-distr} the processes
$B_{\beta,\alpha}$ and $X_{\beta,\alpha}$ do not induce the same
probability measures on the path space $C([0,1],\mathbb{R})$. The
task of distinguishing between the two measures becomes a question
of interest. One possibility is to investigate the variation of the
paths under these measures.

We start by recalling the notion of $p$-variation, $p$-variation
index of a function.

Let $f$ be a function in $C([0,T],\mathbb{R})$. The $p$-variation,
$0<p<\infty$, of $f$ is defined by 
\begin{equation}
v_{p}(f;[0,T]):=\sup_{\Pi_{n}}\sum_{k=1}^{n}|f(t_{k})-f(t_{k-1})|^{p},\label{eq:-pvariation-function}
\end{equation}
where the supremum is taken over all partitions $\Pi_{n}=\{t_{0},t_{1},\ldots,t_{n}\}$,
$n\ge1$ of $[0,T]$ with $0=t_{0}<t_{1}<\ldots<t_{n}=T$. 
\begin{rem}
\label{rem:p-variation}If all sample path of a stochastic process
$X$ have bounded $p$-variation on $[0,t]$ for certain $0<p<\infty$
and $0<t\le1$, as the set of all partitions on $[0,t]$ is not measurable,
then the function $w\mapsto v_{p}(X(\cdot,w);[0,t])$ need not be
measurable. In order to overcome this difficulty, for a continuous
stochastic process, the $p$-variation over an interval is indistinguishable
from the $p$-variation over a countable and everywhere dense set,
which is always measurable. 
\end{rem}
Let $f$ be a continuous function on $[0,1]$ and $\lambda=\left\{ \lambda_{m}\middle|m\ge1\right\} $
a nested sequence of dyadic partitions $\lambda_{m}=\{i2^{-m}:i=0,\ldots,2^{m}\}$,
$m\ge1$, of $[0,1]$. For $0<p<\infty$, $m\ge1$, and $t\in[0,1]$,
let 
\[
v_{p}(f;\lambda_{m})(t):=\max\left\{ \sum_{j=1}^{k}|f(s_{i(j)}\wedge t)-f(s_{i(j-1)}\wedge t)|^{p}\middle|0=i(0)<i(1)<\ldots<i(k)=2^{m}\right\} ,
\]
where $s_{i}:=i2^{-m}$, $i=0,\ldots,2^{m}$. Since $\lambda$ is
a nested sequence of partitions, the sequence $v_{p}(f;\lambda_{m})(t)$,
$m\ge1$ is non-decreasing for each $t\in[0,1]$. For $t\in[0,1]$
we define 
\begin{equation}
v_{p}(f)(t):=\sup_{m\ge1}v_{p}(f;\lambda_{m})(t)=\lim_{m\to\infty}v_{p}(f;\lambda_{m})(t).\label{eq:limit-vp}
\end{equation}
For continuous functions $f$ the $v_{p}(f)(t)$ is equal to $v_{p}(f;[0,t])$
for any $t\in[0,1]$. For a continuous stochastic process $X=\{X(t),\;0\le t\le1\}$
and for each $t\in[0,1]$ 
\[
v_{p}(X)(t,w):=v_{p}(X(\cdot,w))(t)
\]
is possibly an unbounded but a measurable function of $w\in\Omega$.
It is clear from the definition \eqref{eq:limit-vp} that if $X$
is adapted to the filtration $(\mathcal{F}_{t})_{t\ge0}$, then $v_{p}(X)$
is also adapted to $(\mathcal{F}_{t})_{t\ge0}$ whose almost all sample
paths are non-decreasing. If $v_{p}(X)(1)<\infty$ almost surely,
then $\{v_{p}(X;[0,t]),t\in[0,1]\}$ is a stochastic process which
is indistinguishable from $\{v_{p}(X)(t),t\in[0,1]\}$, see Theorem
2, page 117 in \cite{Norvaisa2018}. 
\begin{defn}[cf.\ \cite{Norvaisa2018}]
Let $0<p<\infty$ and $X=\{X(t),\;0\le t\le1\}$ be a given stochastic
process. We say that $X$ is of bounded $p$-variation if $X$ is
adapted to $(\mathcal{F}_{t})_{t\ge0}$ and $v_{p}(X)(1)<\infty$
almost surely. We call $v_{p}(X)$ the $p$-variation process of $X$. 
\end{defn}
For a function $f:[0,1]\longrightarrow\mathbb{R}$, the $p$-variation
index of $f$ is defined by 
\[
v(f):=v(f;[0,1]):=\begin{cases}
\inf\left\{ p>0\middle|v_{p}(f;[0,1])<+\infty\right\} , & \mathrm{if\;the\;set\;is\;non\;empty},\\
+\infty, & \mathrm{otherwise}.
\end{cases}
\]
The $p$-variation index of a stochastic process $X$ is defined similarly
and is denoted by $v(X;[0,1])$. More precisely, the $p$-variation
index $v(X;[0,1])$ is defined provided $v(X(\cdot,w);[0,1])$ is
a constant for almost all $w\in\Omega$. We say that the $p$-variation
index of $X$ is $q$ if with probability $1$, the $p$-variation
index $v(X;[0,1])=q$. 
\begin{example}
\label{exa:p-variation-W} 
\begin{enumerate}
\item For a standard Brownian motion $W$ we have almost surely $v_{p}(W;[0,1])<+\infty$
for $p>2$ and $\upsilon_{2}(W;[0,1])=+\infty$. Then $\upsilon(W;[0,1])=2$,
see \cite{Levy40}.
\item If $Y$ is a semimartingale then, for any $p>2$, $Y$ has bounded
$p$-variation, see \cite{Norvaisa2018}. In addition, it is well
known that it must have unbounded $p$-variation for every $p<2$,
see \cite[Thm.~5 page~120]{Norvaisa2018}. Then we conclude that
$\upsilon(Y;[0,1])=2$. 
\item For a fractional Brownian motion $B^{H}$. We have almost surely $v_{p}(B^{H};[0,1])<+\infty$
for $p>1/H$ and $v_{1/H}(B^{H};[0,1])=+\infty$. So $\upsilon(B^{H},[0,1])=1/H$,
see \cite[Sec.~5.3]{Dudley1998}. 
\item For ggBm $B_{\beta,\alpha}$ we have almost surely $v_{p}(B_{\beta,\alpha};[0,1])<+\infty$
for $p>\nicefrac{2}{\alpha}$ and $v_{\nicefrac{2}{\alpha}}(B_{\beta,\alpha};[0,1])=+\infty$.
So $\upsilon(B_{\beta,\alpha};[0,1])=\nicefrac{2}{\alpha}$, see \cite{DaSilva2018}. 
\end{enumerate}
\end{example}
Now we consider the sets 
\begin{eqnarray}
\mathcal{V}_{\nicefrac{2}{\alpha}}&:=&\left\{ w\in C\big([0,1],\mathbb{R}\big)\middle|\upsilon(w;[0,1])=\frac{2}{\alpha}\right\} \\
\mathcal{V}_{2,loc}&:=&\left\{ w\in C\big(\mathbb{R}^{+},\mathbb{R}\big)\middle|\upsilon(w,[0,T])=2,\,\forall\,T>0\right\} .
\end{eqnarray}
Note that $\mathcal{V}_{2,loc}$ is a measurable set. In addition,
we define the subset $\hat{\mathcal{V}}$ of $\mathcal{X}$ (cf.~\eqref{eq:subor-full-set})
by 
\begin{equation}
\hat{\mathcal{V}}:=\left\{ \hat{w}:=w\circ l\circ f:[0,1]\longrightarrow\mathbb{R}\middle|\,w\in\mathcal{V}_{2,loc},\;l\in\mathbb{U}\right\} .\label{eq:subo-full-subset}
\end{equation}
It is easy to see that for any $\hat{w}\in\hat{\mathcal{V}}$, we
have $v(\hat{w};[0,1])=2.$ 
\begin{lem}
\label{lem:disjoint}The sets $\mathcal{V}_{\nicefrac{2}{\alpha}}$
and $\hat{\mathcal{V}}$ are disjoint. 
\end{lem}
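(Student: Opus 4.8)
The plan is to argue by contradiction, using the fact that the $p$-variation index of a continuous function is a single well-defined number, so one function cannot simultaneously have index $\nicefrac{2}{\alpha}$ and index $2$. Concretely, I would suppose $w_{0}\in\mathcal{V}_{\nicefrac{2}{\alpha}}\cap\hat{\mathcal{V}}$ and derive a contradiction. Membership in $\mathcal{V}_{\nicefrac{2}{\alpha}}$ gives $\upsilon(w_{0};[0,1])=\nicefrac{2}{\alpha}$ by definition. On the other hand, the observation recorded just before the statement asserts that every $\hat{w}\in\hat{\mathcal{V}}$ satisfies $\upsilon(\hat{w};[0,1])=2$, so $w_{0}\in\hat{\mathcal{V}}$ forces $\upsilon(w_{0};[0,1])=2$. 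Since $1<\alpha<2$ we have $1<\nicefrac{2}{\alpha}<2$, hence $\nicefrac{2}{\alpha}\neq2$, a contradiction. Therefore no such $w_{0}$ exists and the two sets are disjoint.

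The only real content hides in the assertion $\upsilon(\hat{w};[0,1])=2$ for $\hat{w}=w\circ l\circ f$ with $w\in\mathcal{V}_{2,loc}$ and $l\in\mathbb{U}$, which I would justify (if not simply taken for granted) by invariance of the $p$-variation index under a continuous nondecreasing time change. Writing $g:=l\circ f$, the map $g:[0,1]\to[0,T]$ with $T:=l(1)$ is continuous, nondecreasing, and surjective (since $g(0)=0$, $g(1)=T$, by the intermediate value theorem). For continuous $w$ one checks $v_{p}(w\circ g;[0,1])=v_{p}(w;[0,T])$ for every $p\in(0,\infty)$: the inequality $\le$ holds because a partition of $[0,1]$ maps under $g$ to an ordered, possibly degenerate family of points in $[0,T]$, the degenerate terms vanishing as $w\circ g$ is constant where $g$ is; the inequality $\ge$ holds because each point of a partition of $[0,T]$ has a $g$-preimage, and these can be taken strictly increasing since the level sets of $g$ are disjoint ordered intervals. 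Hence the indices agree, and as $w\in\mathcal{V}_{2,loc}$ gives $\upsilon(w;[0,T])=2$ whenever $T>0$, we obtain $\upsilon(\hat{w};[0,1])=2$.

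I expect the one point deserving attention to be the degenerate case $T=l(1)=0$, in which $l\equiv0$, $\hat{w}\equiv0$, and the index collapses to $0$ rather than $2$. This does no harm to the lemma: whether the index of $\hat{w}$ is $2$ or $0$, it is still different from $\nicefrac{2}{\alpha}\in(1,2)$, so disjointness is preserved. Beyond this degenerate subtlety the argument is otherwise elementary, resting only on the uniqueness of the $p$-variation index and the numerical inequality $\nicefrac{2}{\alpha}\neq2$ valid for all $\alpha\in(1,2)$.
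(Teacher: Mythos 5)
Your proof is correct and takes essentially the same route as the paper: argue by contradiction, noting that a $w_{0}$ in the intersection would have $p$-variation index equal both to $\nicefrac{2}{\alpha}$ (by definition of $\mathcal{V}_{\nicefrac{2}{\alpha}}$) and to $2$ (by the property of $\hat{\mathcal{V}}$), which is impossible since $\nicefrac{2}{\alpha}<2$ for $\alpha\in(1,2)$. The only difference is that the paper simply asserts the key fact $\upsilon(\hat{w};[0,1])=2$ as ``easy to see'' just before the lemma, whereas you actually prove it via invariance of the $p$-variation index under the nondecreasing time change $g=l\circ f$, and in doing so you correctly catch the degenerate case $l(1)=0$ (where $\hat{w}\equiv0$ has index $0$, so the paper's preceding assertion is literally false, although disjointness still holds since $0\neq\nicefrac{2}{\alpha}$) --- a gap the paper overlooks.
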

\begin{proof}
Let $\hat{w}\in\mathcal{V}_{\nicefrac{2}{\alpha}}\cap\hat{\mathcal{V}}$
be given. As $\hat{w}\in\mathcal{V}_{\nicefrac{2}{\alpha}}$, it follows
from the definition of $\mathcal{V}_{\nicefrac{2}{\alpha}}$ that
\[
v(\hat{w};[0,1])=\frac{2}{\alpha}.
\]
On the other hand, $\hat{w}\in\hat{\mathcal{V}}$ then $v(\hat{w};[0,1])=2$
which is absurd because $\frac{2}{\alpha}<2$, for any $\alpha\in(1,2)$. 
\end{proof}
In order to prove the main theorem of this section we need the following
result. 
\begin{prop}[{{\cite[Thm.~1.35]{Morters_Peres}}}]
\label{prop:qv-Bm}Let $T>0$ be given and $\Pi_{n}=\{t_{0},t_{1},\ldots,t_{n}\}$,
$n\ge1$ with $0=t_{0}<t_{1}<\ldots<t_{n}=T$ be a sequence of nested
partitions of $[0,T]$, that is at each state one or more partition
points are added, such that the mesh $\|\Pi_{n}\|:=\max_{0\le i\le n}|t_{i-1}-t_{i}|\to0$,
$n\to\infty$. Then, almost surely, we have 
\begin{equation}
\lim_{n\rightarrow\infty}\sum_{i=1}^{n}\left|W(t_{i})-W(t_{i-1})\right|^{2}=T.\label{eq:qv-Bm}
\end{equation}
\end{prop}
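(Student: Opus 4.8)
The plan is to prove the two modes of convergence separately: first establish convergence in $L^2$ to $T$, which is an elementary second-moment computation, and then upgrade to almost sure convergence by exploiting the \emph{nested} structure of the partitions through a backward martingale argument. For the $L^2$ part, write $\Delta_i := t_i - t_{i-1}$ and $S_n := \sum_{i=1}^n (W(t_i) - W(t_{i-1}))^2$. Because Brownian increments over disjoint intervals are independent and $W(t_i) - W(t_{i-1}) \sim N(0,\Delta_i)$, one has $\mathbb{E}(S_n) = \sum_{i=1}^n \Delta_i = T$ for every $n$, and, using $\mathrm{Var}(Z^2) = 2\sigma^4$ for $Z \sim N(0,\sigma^2)$,
\[
\mathrm{Var}(S_n) = 2\sum_{i=1}^n \Delta_i^2 \le 2\,\|\Pi_n\|\sum_{i=1}^n \Delta_i = 2T\,\|\Pi_n\| \xrightarrow[n\to\infty]{} 0.
\]
Hence $S_n \to T$ in $L^2$, and in particular in probability.

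Second, I would promote this to almost sure convergence. The nestedness is essential here: since $S_n$ is neither monotone nor has summable variances for a general mesh, a direct Borel--Cantelli argument fails, and instead $(S_n)_n$ should be recognised as a backward martingale. Let $\mathcal{G}_n := \sigma(S_n, S_{n+1}, \dots)$, a decreasing family of $\sigma$-algebras. The key step is the single-refinement identity: when passing from $\Pi_n$ to the finer $\Pi_{n+1}$, an interval $[a,b]$ carrying the term $(W(b) - W(a))^2$ is split, say by a point $c$, into two increments $X := W(c) - W(a)$ and $Y := W(b) - W(c)$; since $(X+Y)^2 - (X^2 + Y^2) = 2XY$, this yields $S_n = S_{n+1} + 2XY$ (with an analogous sum of cross terms if several points are inserted at once). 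The decisive observation is that $\mathcal{G}_{n+1}$ is generated by quantities that are invariant under the sign flip of all finest-scale increments lying inside $[a,c]$ --- every squared increment entering $S_m$, $m \ge n+1$, is preserved because nestedness guarantees that no finer partition point straddles $c$ --- whereas this flip sends $XY \mapsto -XY$. Consequently $\mathbb{E}(2XY \mid \mathcal{G}_{n+1}) = 0$, so $\mathbb{E}(S_n \mid \mathcal{G}_{n+1}) = S_{n+1}$ and $(S_n)$ is a backward martingale.

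Finally, the backward martingale convergence theorem gives that $S_n$ converges almost surely (and in $L^1$) to some limit $S_\infty$. Since convergence in $L^2$ already forces any limit in probability to equal the constant $T$, we must have $S_\infty = T$ almost surely, which is the claim. I expect the main obstacle to be the careful verification of the backward martingale property --- specifically, identifying the correct decreasing filtration and checking rigorously that conditioning on $\mathcal{G}_{n+1}$ annihilates the cross terms via the sign-flip symmetry, taking proper account of the case where several partition points are added at a single stage. The $L^2$ computation and the invocation of the convergence theorem are then routine.
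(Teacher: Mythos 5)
Your proof is correct, and it is essentially the argument behind the result as the paper uses it: the paper itself offers no proof of this proposition, importing it verbatim from \cite[Thm.~1.35]{Morters_Peres}, and the proof given there is precisely your combination of the $L^{2}$ estimate $\mathrm{Var}(S_{n})\le 2T\|\Pi_{n}\|$ with the reverse-martingale property $\mathbb{E}(S_{n}\mid\mathcal{G}_{n+1})=S_{n+1}$ (obtained from the sign-flip symmetry that nestedness makes available) and L\'evy's downward martingale convergence theorem. The only point worth polishing is the phrase ``sign flip of all finest-scale increments'': since there is no finest scale, the flip should be realized as the measure-preserving path reflection $W(t)\mapsto 2W(a)-W(t)$ for $t\in[a,c]$, continued by $W(t)-2\left(W(c)-W(a)\right)$ for $t\ge c$, which negates the increment of every subinterval of $[a,c]$, fixes every squared increment generating $\mathcal{G}_{n+1}$ (no interval of a finer partition straddles $a$ or $c$, as both are points of all finer partitions), and hence yields $\mathbb{E}(X_{p}X_{q}\mid\mathcal{G}_{n+1})=0$ pair by pair when several points are inserted at once.
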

\begin{thm}
\label{thm:Singularity}The two probability measures $P_{B_{\beta,\alpha}}$
and $P_{X_{\beta,\alpha}}$ are mutually singular. 
\end{thm}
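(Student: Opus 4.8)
The plan is to exhibit two disjoint measurable subsets of the path space, one carrying all the mass of $P_{B_{\beta,\alpha}}$ and the other all the mass of $P_{X_{\beta,\alpha}}$; mutual singularity then follows immediately. The natural candidates are already prepared: the set $\mathcal{V}_{\nicefrac{2}{\alpha}}$ of continuous paths whose $p$-variation index on $[0,1]$ equals $\nicefrac{2}{\alpha}$, and the set $\hat{\mathcal{V}}$ of time-changed paths built from $\mathcal{V}_{2,loc}$, which by Lemma \ref{lem:disjoint} are disjoint because $\nicefrac{2}{\alpha}<2$ for $\alpha\in(1,2)$. Thus the whole argument reduces to the two concentration statements $P_{B_{\beta,\alpha}}(\mathcal{V}_{\nicefrac{2}{\alpha}})=1$ and $P_{X_{\beta,\alpha}}(\hat{\mathcal{V}})=1$.

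For the first, I would appeal directly to item 4 of Example \ref{exa:p-variation-W}: almost every path of $B_{\beta,\alpha}$ has $p$-variation index exactly $\nicefrac{2}{\alpha}$ on $[0,1]$, which is precisely the defining property of $\mathcal{V}_{\nicefrac{2}{\alpha}}$. Since, by Remark \ref{rem:p-variation}, the map $w\mapsto v_p(w;[0,1])$, and hence the index $w\mapsto v(w;[0,1])$, is measurable on $C([0,1],\mathbb{R})$, the set $\mathcal{V}_{\nicefrac{2}{\alpha}}$ is a genuine event receiving full $P_{B_{\beta,\alpha}}$-mass.

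For the second, I would work on the product realization $(C(\mathbb{R}^{+},\mathbb{R})\times\mathbb{U},P_{W}\otimes P_{U_{\beta,\alpha}})$, on which $X_{\beta,\alpha}(t)(w,l)=w(l(f(t)))$. First, Example \ref{exa:p-variation-W}(1) together with Proposition \ref{prop:qv-Bm} (applied on each $[0,T]$, passing to all $T$ simultaneously by monotonicity in $T$ and self-similarity along a dense countable set) gives $v(W;[0,T])=2$ for every $T>0$ almost surely, that is $P_{W}(\mathcal{V}_{2,loc})=1$. Next, for fixed $l\in\mathbb{U}$ the composition $l\circ f\colon[0,1]\to[0,l(f(1))]$ is continuous, nondecreasing and surjective onto its range, so a partition of $[0,1]$ maps to a partition of $[0,l(f(1))]$ and, by surjectivity, conversely; hence $v_{p}(w\circ l\circ f;[0,1])=v_{p}(w;[0,l(f(1))])$ for every $p$, and the index is unchanged by the time change. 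Consequently $w\in\mathcal{V}_{2,loc}$ forces $w\circ l\circ f\in\hat{\mathcal{V}}$ with $v(w\circ l\circ f;[0,1])=2$, and pushing the full-measure event $\{w\in\mathcal{V}_{2,loc}\}$ forward under $(w,l)\mapsto w\circ l\circ f$ yields $P_{X_{\beta,\alpha}}(\hat{\mathcal{V}})=1$.

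Finally, since every element of $\hat{\mathcal{V}}$ has index $2$ we have $\hat{\mathcal{V}}\subseteq\{w:v(w;[0,1])=2\}$, a measurable set disjoint from the measurable set $\mathcal{V}_{\nicefrac{2}{\alpha}}$; taking $A=\mathcal{V}_{\nicefrac{2}{\alpha}}$ gives $P_{B_{\beta,\alpha}}(A)=1$ and $P_{X_{\beta,\alpha}}(A)=0$, which is mutual singularity. The step I expect to be the main obstacle is the reparametrization invariance of the $p$-variation index under the random time change $l\circ f$: care is needed because the inverse subordinator $l=U_{\beta}$ has flat stretches, so the map is only weakly increasing, and one must verify that collapsing coincident partition points (which merely deletes zero-contribution terms) together with surjectivity onto $[0,l(f(1))]$ genuinely preserves the index, while keeping all set manipulations within measurable sets so that the two concentration statements are legitimate.
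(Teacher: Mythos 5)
Your proposal is correct and takes essentially the same route as the paper: it uses the same two sets $\mathcal{V}_{\nicefrac{2}{\alpha}}$ and $\hat{\mathcal{V}}$, the same concentration statements ($P_{B_{\beta,\alpha}}(\mathcal{V}_{\nicefrac{2}{\alpha}})=1$ via the $\nicefrac{2}{\alpha}$-variation result for ggBm, and $P_{X_{\beta,\alpha}}(\hat{\mathcal{V}})=1$ via Proposition \ref{prop:qv-Bm} on the product realization), and the same disjointness Lemma \ref{lem:disjoint}. The only difference is that you spell out the reparametrization invariance of $p$-variation under the monotone time change $l\circ f$ and the measurability bookkeeping, details the paper asserts without proof.
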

\begin{proof}
As a consequence of Proposition~\ref{prop:qv-Bm} we have $P_{W}(\mathcal{V}_{2,loc})=1$,
then we obtain $P_{X_{\beta,\alpha}}\big(\hat{\mathcal{V}}\big)=1$.
On the other hand, \eqref{eq:variation-ggBm} implies that $P_{B_{\beta,\alpha}}\big(\mathcal{V}_{\nicefrac{2}{\alpha}}\big)=1$.
From Lemma~\ref{lem:disjoint} we have $\mathcal{V}_{\nicefrac{2}{\alpha}}\cap\hat{\mathcal{V}}=\emptyset$.
This last requirement guarantees the mutual singularity of the probability
measures $P_{B_{\beta,\alpha}}$ and $P_{X_{\beta,\alpha}}$. 
\end{proof}
\begin{rem}
The fact that $P_{X_{\beta,\alpha}}(\hat{\mathcal{V}})=1$ is also
a consequence of the semimartingale property of $X_{\beta,\alpha}$,
see Example~\ref{exa:p-variation-W}-2. 
\end{rem}

\section{Singularity of the Solutions of SDEs Driven by Generalized Grey Brownian
Motion and Time-Changed Brownian motion}

\label{sec:Applications-SDEs}In this section we show that the singularity
property from Section~\ref{sec:Singularity} propagates to the probability
measures induced by the processes which are solutions to the SDEs
driven by ggBm $B_{\beta,\alpha}$ and the time-changed Bm $W(U_{\beta,\alpha})$.
In order to show this result, at first we recall some results on the
existence and uniqueness solutions of SDEs driven by processes with
the bounded $p$-variation. In \cite{Lyons1994} considered the integral
equation 
\[
y(t)=y(0)+\int_{0}^{t}g(y(s))\,dh(s),\;0\le t\le1,
\]
where $y(0)\in\mathbb{R}$ and $h$ is continuous with bounded $p$-variation
on $[0,1]$ for some $p\in[1,2[$. He proved that this equation has
a unique solution in the space $C\mathscr{W}_{\ensuremath{p}}([0,1])$
of continuous functions of bounded $p$-variation if $g\in C^{1+\varkappa}(\mathbb{R})$
for some $\varkappa>p-1$.

Since almost all sample paths of ggBm $B_{\beta,\alpha}$ have bounded
$p$-variation for $p>\nicefrac{2}{\alpha}$, cf.\ \cite{DaSilva2018},
the SDE
\[
X(t)  =x(0)+\int_{0}^{t}g(X(s))\,dB_{\beta,\alpha}(s),\;0\le t\le1,\label{eq:SDE-ggBm-1}
\]
can be solved pathwise. It has a unique solution if $g\in C^{1+\varkappa}(\mathbb{R})$
for some $\varkappa>\nicefrac{2}{\alpha}-1$, see \cite{Kubilius2000}
and references therein. It should be noted that the solution $X$
belongs $C\mathscr{W}_{\ensuremath{p}}([0,1])$ for every $p\in]\nicefrac{2}{\alpha},\varkappa+1[$.
It follows that $\upsilon(X;[0,1])\leq\nicefrac{2}{\alpha}$. So,
we have $P_{X}\big(\mathcal{\tilde{V}}_{\nicefrac{2}{\alpha}}\big)=1$
where 
\[
\mathcal{\tilde{V}}_{\nicefrac{2}{\alpha}}:=\left\{ w\in C\big([0,1],\mathbb{R}\big)\middle|\upsilon(w;[0,1])\leq\frac{2}{\alpha}\right\} .
\]

Let us consider the stochastic differential equation driven by the
time-changed Bm 
\begin{equation}
Y(t)=y(0)+\int_{0}^{t}f(Y(s))\,dW(U_{\beta,\alpha}(s)),\quad0\le t\le1,\label{eq:SDE-time-change-Bm}
\end{equation}
where $f$ is a real-valued function, defined on $\mathbb{R}$ which
satisfies the Lipschitz condition. Then there exists a unique ($\mathcal{G}_{t}$)-semimartingale
$Y$ for which \eqref{eq:SDE-time-change-Bm} holds. Moreover this
solution can be written in the form $Y=Z\circ U_{\beta,\alpha}$,
where $Z$ satisfies the stochastic differential equation 
\[
Z(t)=y(0)+\int_{0}^{t}f(Z(s))\,dW(s),\quad0\le t\le1,
\]
see \cite[Thm.~4.2]{Kobayashi2011}. Since $Y$ is a semimartingale
then we conclude that $\upsilon(Y;[0,1])=2$, see Example~\ref{exa:p-variation-W}-2.
It follows that $P_{Y}\big(\mathcal{\tilde{\mathcal{V}}}_{2}\big)=1$
where 
\[
\mathcal{\tilde{V}}_{2}:=\left\{ w\in C\big([0,1],\mathbb{R}\big)\middle|\upsilon(w;[0,1])=2\right\} .
\]
As $1<\alpha<2$, then $\nicefrac{2}{\alpha}<2$ form which follows
that the sets $\mathcal{\tilde{V}}_{2}$ and $\tilde{\mathcal{V}}_{\nicefrac{2}{\alpha}}$
are disjoint. This shows the following theorem. 
\begin{thm}
\label{thm:Singularity-1}The two probability measures $P_{X}$ and
$P_{Y}$ are mutually singular. 
\end{thm}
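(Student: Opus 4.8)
The plan is to mirror the argument used for Theorem~\ref{thm:Singularity}: I would exhibit two disjoint measurable subsets of $C([0,1],\mathbb{R})$ that carry full mass under $P_X$ and $P_Y$ respectively, and then appeal directly to the definition of mutual singularity. The two candidate events are exactly the $p$-variation-index level sets $\tilde{\mathcal{V}}_{\nicefrac{2}{\alpha}}$ and $\tilde{\mathcal{V}}_2$ introduced just above the statement.

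First I would collect the two $p$-variation facts already assembled in the preceding discussion. On the $X$ side, the pathwise integration theory of Lyons and Kubilius gives that the solution $X$ lies in $C\mathscr{W}_p([0,1])$ for every $p\in\left]\nicefrac{2}{\alpha},\varkappa+1\right[$, whence $\upsilon(X;[0,1])\le\nicefrac{2}{\alpha}$ almost surely and therefore $P_X\big(\tilde{\mathcal{V}}_{\nicefrac{2}{\alpha}}\big)=1$. On the $Y$ side, the representation $Y=Z\circ U_{\beta,\alpha}$ from \cite[Thm.~4.2]{Kobayashi2011} identifies $Y$ as a $(\mathcal{G}_t)$-semimartingale, and by Example~\ref{exa:p-variation-W}-2 every semimartingale has $p$-variation index exactly $2$, so that $P_Y\big(\tilde{\mathcal{V}}_2\big)=1$. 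Second, I would record the disjointness $\tilde{\mathcal{V}}_{\nicefrac{2}{\alpha}}\cap\tilde{\mathcal{V}}_2=\emptyset$: since $1<\alpha<2$ forces $\nicefrac{2}{\alpha}<2$, any $w\in\tilde{\mathcal{V}}_{\nicefrac{2}{\alpha}}$ satisfies $\upsilon(w;[0,1])\le\nicefrac{2}{\alpha}<2$ and hence cannot lie in $\tilde{\mathcal{V}}_2=\{\upsilon(\cdot;[0,1])=2\}$. With a full-measure set for each law sitting inside complementary events, mutual singularity is immediate.

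The step to watch—more a point of care than a genuine obstacle—is the measurability flagged in Remark~\ref{rem:p-variation}: the index $\upsilon(\cdot;[0,1])$ is built from the dyadic limits $v_p(\cdot)(t)$ of \eqref{eq:limit-vp}, which are measurable functions of the continuous path, so the level sets $\tilde{\mathcal{V}}_{\nicefrac{2}{\alpha}}$ and $\tilde{\mathcal{V}}_2$ are measurable and the probability statements above are meaningful. The genuinely substantive work lives upstream, in the existence-and-uniqueness theorems for the two SDEs and in the $p$-variation-index identifications for their solutions; once those are in hand, the singularity itself reduces to the one-line disjoint-support argument described above, exactly parallel to the proof of Theorem~\ref{thm:Singularity}.
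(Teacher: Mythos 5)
Your proposal is correct and follows essentially the same route as the paper: the paper's own argument is precisely the discussion preceding the theorem, establishing $P_X\big(\tilde{\mathcal{V}}_{\nicefrac{2}{\alpha}}\big)=1$ via the Lyons--Kubilius pathwise solution theory, $P_Y\big(\tilde{\mathcal{V}}_2\big)=1$ via Kobayashi's semimartingale representation $Y=Z\circ U_{\beta,\alpha}$, and disjointness from $\nicefrac{2}{\alpha}<2$. Your additional remark on the measurability of the level sets is a sensible point of care that the paper leaves implicit, but it does not change the argument.
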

\begin{rem}
We would like to mention that the result of Theorem~\ref{thm:Singularity-1}
can be extend to more general SDEs. Indeed, let us consider the following
SDE 
\[
X(t)  =x(0)+\int_{0}^{t}\sigma_{1}(X(s))\,dB_{\beta,\alpha}(s)+\int_{0}^{t}b_{1}(X(s))\,ds,\;0\le t\le1,\label{eq:SDE-ggBm-1-1}
\]
where $b_{1}$ is a Lipschitz continuous function and $\sigma_{1}\in C^{1+\varkappa}(\mathbb{R})$
for some $\varkappa>\nicefrac{2}{\alpha}-1$ with $\varkappa>p-1$.
Since almost all sample paths of ggBm $B_{\beta,\alpha}$ have bounded
$p$-variation for $p>\nicefrac{2}{\alpha}$, then this equation has
a unique solution $X$ in $C\mathscr{W}_{\ensuremath{p}}([0,1])$
for every $p\in]\nicefrac{2}{\alpha},\varkappa+1[$, see \cite{Kubilius2002}.
It follows that $\upsilon(X;[0,1])\leq\nicefrac{2}{\alpha}$.

On the other hand let us consider the following SDE 
\begin{equation}
Y(t)=y(0)+\int_{0}^{t}\sigma_{2}(Y(s))\,dW(U_{\beta,\alpha}(s))+\int_{0}^{t}b_{2}(Y(s))\,dU_{\beta,\alpha}(s),\quad0\le t\le1,\label{eq:SDE-time-change-Bm-1}
\end{equation}
where $\sigma_{2}$ and $b_{1}$ is a Lipschitz continuous functions.
Then there exists a unique ($\mathcal{G}_{t}$)-semimartingale $Y=Z\circ U_{\beta,\alpha}$
for which \eqref{eq:SDE-time-change-Bm-1} holds, where $Z$ satisfies
the stochastic differential equation 
\[
Z(t)=y(0)+\int_{0}^{t}\sigma_{2}(Z(s))\,dW(s)+\int_{0}^{t}b_{2}(Z(s))\,ds,\quad0\le t\le1,
\]
see \cite[Thm.~4.2]{Kobayashi2011}. As $Y$ is a semimartingale
then we conclude that $\upsilon(Y;[0,1])=2$. It follows that the
two probability measures are mutually singular. 
\end{rem}

\subsection*{Acknowledgments}

The first named author would like to express his gratitude to Youssef
Ouknine for the hospitality during a very pleasant stay in Mohamed
VI Polytechnic University and University Cadi Ayyad during which most
of this work was produced.

\subsection*{Founding}

This work has been partially supported by the Laboratory LIBMA from
the University Cadi Ayyad Marrakech through the program Pierre Marie
Curie ITN FP7-PEOPLE N.~213841/2008. Jos{\'e} L. da Silva is a member
of the Centro de Investiga{\c c\~a}o em Matem{\'a}tica e Aplica{\c c\~o}es
(CIMA), Universidade da Madeira, a research centre supported with
Portuguese funds by FCT (Funda{\c c\~a}o para a Ci{\^e}ncia e a
Tecnologia, Portugal) through the Project UID/MAT/04674/2013.

\end{document}